\documentclass[pdflatex,sn-mathphys-num]{sn-jnl}

\usepackage{graphicx}
\usepackage{multirow}
\usepackage{amsmath,amssymb,amsfonts}
\usepackage{amsthm}
\usepackage{mathrsfs}
\usepackage[title]{appendix}
\usepackage{xcolor}
\usepackage{textcomp}
\usepackage{manyfoot}
\usepackage{booktabs}
\usepackage{algorithm}
\usepackage{algorithmicx}
\usepackage{algpseudocode}
\usepackage{listings}
\usepackage{mathtools}

\DeclareMathOperator*{\K}{K}

\theoremstyle{thmstyleone}
\newtheorem{theorem}{Theorem}[section]

\newtheorem{proposition}[theorem]{Proposition}
\newtheorem{conjecture}[theorem]{Conjecture}

\theoremstyle{thmstyletwo}

\newtheorem{remark}{Remark}

\theoremstyle{thmstylethree}

\begin{document}
	
	\title[On a Continued Fraction for $8/\pi^2$]{Analytic Proof of a Quartic Continued Fraction Identity for $8/\pi^2$ via Operator Decoupling}
	
	%% Author Information
	\author*[1]{\fnm{Chao} \sur{Wang}}\email{cwang@shu.edu.cn}
	
	\affil[1]{\orgdiv{School of Future Technology}, \orgname{Shanghai University}, \orgaddress{\city{Shanghai}, \postcode{200444}, \country{China}}}
	
	%% Abstract
	\abstract{We present a rigorous analytic proof of a generalized continued fraction (GCF) identity for the transcendental constant $8/\pi^2$, a result recently conjectured via the algorithmic framework of the Ramanujan Machine. Distinct from canonical GCFs derived from classical hypergeometric series, the identity at hand features a complex polynomial architecture characterized by quartic partial numerators. Our approach utilizes an algebraic decomposition of the second-order shift operator $\mathcal{L} = \mathcal{T}^2 - b_n \mathcal{T} - a_n$ into a coupled first-order system. This decomposition enables an exact mapping of the higher-order recurrence to a cascaded system, from which the continued fraction is identified as the reciprocal of a binomial series for $(\arcsin)^2$ involving central binomial coefficients. The convergence is established through Pincherle's Theorem: the true minimal solution of the associated difference equation is $f_n = A_n - (8/\pi^2)\,B_n$, which satisfies $f_n/B_n \to 0$, confirming absolute convergence of the continued fraction. This work provides a systematic operator-theoretic methodology for verifying automated conjectures of transcendental constants with high-degree polynomial coefficients.}
	
	%% Keywords
	\keywords{Continued fractions, Ramanujan Machine, Operator factorization, Difference equations, Pi formulas}
	
	%% MSC Codes
	\pacs[Mathematics Subject Classification]{11A55, 11B83, 33C05, 39A06, 47B39}
	
	\maketitle
	
	\section{Introduction}
	The algorithmic generation of conjectures on fundamental constants, exemplified by the Ramanujan Machine \cite{Raayoni2021}, has introduced new identities involving generalized continued fractions (GCFs).
	Historically, these identities trace their lineage back to Srinivasa Ramanujan's profound constructions in his \textit{Notebooks}, particularly his series for $1/\pi$. While Ramanujan's classical identities typically involve quadratic or low-degree polynomial patterns, the conjectures surfaced by the Ramanujan Machine often exhibit a higher structural complexity. One such conjecture involves the constant $8/\pi^2$, characterized by a polynomial architecture with quartic partial numerators.
	This identity poses a significant challenge to classical limits due to its quartic degree, which exceeds the polynomial complexity of most historically known GCFs.
	In this work, we provide a rigorous proof of this identity using the framework of operator decoupling via reduction of order for variable-coefficient difference equations.
	
	\begin{conjecture}\label{conj:main}
		The constant $8/\pi^2$ admits the following GCF representation:
		\begin{equation}\label{eq:conjecture}
			\frac{8}{\pi^2} = 1 - \cfrac{2 \cdot 1^4 - 1^3}{7 - \cfrac{2 \cdot 2^4 - 2^3}{19 - \cfrac{2 \cdot 3^4 - 3^3}{37 - \dots}}}
		\end{equation}
		where the partial denominators are $b_n = 3n^2+3n+1$ and partial numerators are $a_n = -(2n^4-n^3)$, equivalently $a_n = -(2n-1)n^3$.
	\end{conjecture}
	
	Conjecture~\eqref{conj:main} presents a notable structural complexity where the complexity of the coefficients (quartic in $n$) seemingly contradicts the simplicity of the limit. In this paper, we provide a formal analytic regularization of this conjecture. By constructing a discrete Green's function analogue through operator decoupling, we resolve the recurrence into a summation series, thereby proving the identity.
	
	\section{Preliminaries}
	The evaluation of GCFs of the form
	\begin{equation}
		x = b_0 + \K_{n=1}^{\infty} \left( \frac{a_n}{b_n} \right) = b_0 + \cfrac{a_1}{b_1 + \cfrac{a_2}{b_2 + \dots}}
	\end{equation}
	is fundamentally governed by the relationship between its partial coefficients and its convergents $A_n/B_n$.
	
	\begin{theorem}[Fundamental Recurrence Formulas~{\cite[Ch.\,1]{wall1948analytic}}]\label{the:Wallis-Euler Recurrence Theorem}
		The numerator sequence $\{A_n\}$ and the denominator sequence $\{B_n\}$ of the $n$-th convergent satisfy the same second-order linear homogeneous difference equation:
		\begin{equation}\label{eq:FRF}
			y_n = b_n y_{n-1} + a_n y_{n-2}, \quad n \ge 1.
		\end{equation}
		The sequences are uniquely determined by their respective initial frames:
		\begin{align}
			\{A_n\}: & \quad A_{-1} = 1, \quad A_0 = b_0, \\
			\{B_n\}: & \quad B_{-1} = 0, \quad B_0 = 1.
		\end{align}
	\end{theorem}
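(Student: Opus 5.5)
The plan is to argue by induction on $n$, proving the two statements together. For each $n\ge 0$ write $C_n(t)$ for the rational function in the last slot,
\[
C_n(t) = b_0 + \cfrac{a_1}{b_1 + \cfrac{a_2}{\ddots + \cfrac{a_n}{t}}},
\]
so that the $n$-th convergent is $C_n(b_n)$. The claim I will prove is the standard Möbius form
\begin{equation*}
C_n(t) = \frac{t\,A_{n-1} + a_n A_{n-2}}{t\,B_{n-1} + a_n B_{n-2}},
\end{equation*}
where $A_n,B_n$ are the sequences defined by the recurrence \eqref{eq:FRF} with the stated initial frames. Setting $t=b_n$ then gives $C_n(b_n)=A_n/B_n$, which is the assertion. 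Uniqueness of $\{A_n\}$ and $\{B_n\}$ is immediate, because the recurrence is of second order and two consecutive initial values are fixed.

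The base cases are direct checks. For $n=0$ the convergent is $b_0 = A_0/B_0$. For $n=1$ the recurrence gives $A_1 = b_1 b_0 + a_1$ and $B_1 = b_1$, and indeed $b_0 + a_1/b_1 = (b_1 b_0 + a_1)/b_1$.

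For the inductive step, assume the Möbius formula holds at level $n$ for all admissible $t$. The $(n+1)$-level expression is obtained from the $n$-level one by substituting $t \mapsto b_n + a_{n+1}/t$, so
\begin{equation*}
C_{n+1}(t) = C_n\!\left(b_n + \frac{a_{n+1}}{t}\right).
\end{equation*}
Substituting this into the inductive formula and clearing the factor $t$ in numerator and denominator produces
\begin{equation*}
C_{n+1}(t) = \frac{t\,(b_n A_{n-1} + a_n A_{n-2}) + a_{n+1} A_{n-1}}{t\,(b_n B_{n-1} + a_n B_{n-2}) + a_{n+1} B_{n-1}}.
\end{equation*}
By \eqref{eq:FRF} the bracketed combinations are exactly $A_n$ and $B_n$. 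The formula therefore propagates, and evaluating at $t=b_{n+1}$ gives $A_{n+1}/B_{n+1}$.

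There is no real obstacle here. The only delicate point is bookkeeping: the induction must be stated for the formal tail variable $t$ rather than for the convergent itself, since it is the inner substitution that propagates. Degenerate zero denominators are handled by treating the identity formally, as an identity of the pairs $(A_n,B_n)$, or equivalently in the $2\times 2$ matrix form
\begin{equation*}
\begin{pmatrix} A_n & A_{n-1}\\ B_n & B_{n-1}\end{pmatrix}
= \begin{pmatrix} b_0 & 1\\ 1 & 0\end{pmatrix}
\prod_{k=1}^{n}\begin{pmatrix} b_k & 1\\ a_k & 0\end{pmatrix}.
\end{equation*}
This matrix form gives the same induction in one line, and I would mention it as the cleanest presentation.
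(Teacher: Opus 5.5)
Your proof is correct. The paper gives no proof of its own; it only cites Wall, Ch.~1, and your induction on the linear fractional form is the classical argument behind that citation: with $T_p(w)=(A_p+A_{p-1}w)/(B_p+B_{p-1}w)$, your $C_n(t)$ is exactly $T_{n-1}(a_n/t)$.

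One small fix: anchor the M\"obius formula at $n=1$ for a general tail $t$, i.e.\ check $b_0+a_1/t=(tA_0+a_1A_{-1})/(tB_0+a_1B_{-1})$, rather than only at $t=b_1$, because at $n=0$ the formula would involve the undefined $a_0$, $A_{-2}$, $B_{-2}$.
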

	
	This theorem permits the embedding of the GCF into the framework of linear operators. Specifically, the recurrence \eqref{eq:FRF} defines the quadratic shift operator $\mathcal{L} = \mathcal{T}^2 - b_n \mathcal{T} - a_n$, where $\mathcal{L}$ denotes the parameter-dependent operator family $\{\mathcal{L}_n\}_{n \ge 1}$ with coefficients $b_n$ and $a_n$; we write $\mathcal{L}$ for brevity, such that $\mathcal{L}y_{n-2}=0$. The distinction between the numerator and denominator trajectories arises solely from their projection onto the solution space of $\mathcal{L}$ under different boundary conditions.
	
	\section{Proof of the Main Result}
	The formal verification of Conjecture \ref{conj:main} necessitates the construction of a rigorous analytic mapping between the second-order difference equation induced by the continued fraction and its associated summation series. This is achieved through a three-phase derivation: Operator Decoupling, Minimal Solution Construction, and Asymptotic Evaluation.
	
	\subsection{Phase 1: Shift Operator Decoupling via Reduction of Order}
	Let $\mathcal{K}$ denote the GCF defined by the polynomial sequences $\{a_n\}_{n \ge 1}$ and $\{b_n\}_{n \ge 0}$. To evaluate the transcendental limit of $\mathcal{K}$, we establish a correspondence between the continued fraction architecture and a second-order linear difference equation.
	
	Let $\mathcal{S}(\mathbb{N})$ denote the vector space of complex-valued sequences $\{y_n\}_{n \ge -1}$. We define the forward shift operator $\mathcal{T} \in \text{End}(\mathcal{S})$ such that $\mathcal{T}y_k = y_{k+1}$ for all $k \ge -1$.
	According to Theorem~\ref{the:Wallis-Euler Recurrence Theorem}, the recurrence \eqref{eq:FRF} can be reformulated as:
	\begin{equation}
		(\mathcal{T}^2 y_{n-2}) - b_n (\mathcal{T}y_{n-2}) - a_n y_{n-2} = 0.
	\end{equation}
	
	To solve this, we seek a decoupling of the recurrence into a system of first-order difference equations. Instead of a direct operator product which is complicated by the non-commutativity of $\mathcal{T}$, we propose an algebraic decoupling. We seek auxiliary sequences $\{c_n\}$ and $\{d_n\}$ that satisfy the following coupled relations:
	\begin{equation}\label{eq:coupling}
		b_n = c_n + d_{n+1}, \quad a_n = -c_n d_n.
	\end{equation}
	Substituting these into \eqref{eq:FRF}, the recurrence becomes:
	\begin{equation}
		y_n - (c_n + d_{n+1}) y_{n-1} + c_n d_n y_{n-2} = 0.
	\end{equation}
	Rearranging terms enables a reduction of order:
	\begin{equation}\label{eq:rearranged_rec}
		y_n - d_{n+1} y_{n-1} = c_n (y_{n-1} - d_n y_{n-2}).
	\end{equation}
	This formulation transforms the homogeneous second-order equation into a nested first-order system.
	
	For the specific polynomial architecture of the $8/\pi^2$ identity, we present the following realization:
	
	\begin{proposition}\label{prop:factor}
		For the conjectured coefficients $b_n = 3n(n+1)+1$ and $a_n = -(2n-1)n^3$ (equivalently $a_n = -(2n^4 - n^3)$), the recurrence admits an exact reduction over $\mathbb{N}^{+}$ with the auxiliary sequences:
		\begin{equation}
			c_n = n^2, \quad d_n = n(2n-1).
		\end{equation}
	\end{proposition}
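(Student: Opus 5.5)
Since the decoupling \eqref{eq:coupling} consists of two polynomial identities in $n$, I will verify each one directly with the proposed $c_n=n^2$ and $d_n=n(2n-1)$, and then check that \eqref{eq:rearranged_rec} holds for every $n\ge 1$, including the boundary index.

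First, the additive relation. I compute $d_{n+1}=(n+1)(2n+1)=2n^2+3n+1$. Adding $c_n=n^2$ gives $c_n+d_{n+1}=3n^2+3n+1=3n(n+1)+1=b_n$, which is the stated partial denominator.

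Second, the multiplicative relation. Here $c_nd_n=n^2\cdot n(2n-1)=(2n-1)n^3=2n^4-n^3$. Hence $-c_nd_n=-(2n-1)n^3=a_n$, matching the conjectured partial numerator in both of its equivalent forms.

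With both identities in hand, I substitute into \eqref{eq:FRF} exactly as in the derivation preceding the proposition, which gives
\begin{equation*}
y_n-d_{n+1}y_{n-1}=n^2\bigl(y_{n-1}-d_ny_{n-2}\bigr),\qquad n\ge 1 .
\end{equation*}
The only point needing care is that the reduction be valid on all of $\mathbb{N}^{+}$, including $n=1$, where $y_{-1}$ enters through the initial frames of Theorem~\ref{the:Wallis-Euler Recurrence Theorem}. At $n=1$ we have $d_1=1$, $d_2=6$, $c_1=1$, so the relation reads $y_1-6y_0=y_0-y_{-1}$, i.e.\ $y_1=7y_0-y_{-1}$. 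This agrees with $b_1=7$ and $a_1=-1$.

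Both defining identities are polynomial equalities that hold for every integer $n\ge 1$, and $c_n,d_n$ are defined there with no division or singularity. The factorization is therefore exact over $\mathbb{N}^{+}$, and no step presents a genuine obstacle; the argument is pure verification.
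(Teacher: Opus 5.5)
Your proof is correct and matches the paper's argument: a direct polynomial check that $c_n + d_{n+1} = b_n$ and $-c_n d_n = a_n$, after which the reduced form \eqref{eq:rearranged_rec} follows by substitution. Your separate $n=1$ check is redundant, since it is just a special case of those two identities, but it does no harm.
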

	
	\begin{proof}
		The proof proceeds via direct polynomial verification within the coupling constraints \eqref{eq:coupling}. Substituting the proposed sequences:
		\begin{enumerate}
			\item \textit{Summation Invariance:}
			\[ c_n + d_{n+1} = n^2 + (n+1)[2(n+1)-1] = n^2 + (n+1)(2n+1) = n^2 + 2n^2 + 3n + 1 = b_n. \]
			\item \textit{Product Invariance:}
			\[ c_n d_n = n^2 \cdot [n(2n-1)] = 2n^4 - n^3 = -a_n. \]
		\end{enumerate}
		Since both identities hold as algebraic tautologies for all $n \in \mathbb{N}^{+}$, the second-order operator $\mathcal{L}$ is reduced to a cascaded system.
	\end{proof}

	\subsection{Phase 2: Analytic Construction of the Solution via Operator Decoupling}
	
	The algebraic reduction \eqref{eq:rearranged_rec} established in the preceding section implies that any solution $\{y_n\}$ of the second-order recurrence can be reconstructed via a coupled first-order system. By introducing an auxiliary sequence $\{w_n\} \in \mathcal{S}$, we define the decoupling architecture as follows:
	\begin{align}
		w_{n-1} &= y_{n-1} - d_n y_{n-2}, \label{eq:decoupling_def} \\
		w_{n} &= c_{n} w_{n-1}. \label{eq:aux_prop}
	\end{align}
	Equation \eqref{eq:decoupling_def} characterizes $\{w_n\}$ as the \textit{non-homogeneous component}, while \eqref{eq:aux_prop} dictates the autonomous evolution of this component.
	
	\subsubsection{Derivation of the Denominator $\{B_n\}$}
	The denominator sequence $\{B_n\}$ is uniquely determined by the canonical initial frame $(B_0, B_{-1}) = (1, 0)$. We first determine the initial state of the auxiliary sequence. Setting $k=1$ in \eqref{eq:decoupling_def}:
	\begin{equation}
		w_0 = B_0 - d_1 B_{-1}.
	\end{equation}
	Substituting the prescribed initial values $B_0 = 1$ and $B_{-1} = 0$, and noting $d_1 = 1(2\cdot 1-1) = 1$:
	\begin{equation}
		w_0 = 1 - 1\cdot 0 = 1.
	\end{equation}
	For $k \ge 1$, iterating \eqref{eq:aux_prop} gives
	\begin{equation}\label{eq:wk_product}
		w_k = \left(\prod_{j=1}^{k} c_j\right) w_0 = \prod_{j=1}^{k} j^2 = (k!)^2.
	\end{equation}
	To obtain a closed form for $B_n$, we apply an integrating factor. Define $P_k = \prod_{j=1}^{k+1} d_j$. Since $P_k = d_{k+1} P_{k-1}$, dividing $y_k = d_{k+1} y_{k-1} + w_k$ by $P_k$ yields the telescoping identity
	\begin{equation}\label{eq:telescoping}
		\frac{B_k}{P_k} - \frac{B_{k-1}}{P_{k-1}} = \frac{w_k}{P_k} = t_k,
	\end{equation}
	where we define the general term
	\begin{equation}\label{eq:tk_def}
		t_k = \frac{\prod_{j=1}^{k} c_j}{\prod_{j=1}^{k+1} d_j}, \quad k \ge 0.
	\end{equation}
	Summing \eqref{eq:telescoping} from $k=0$ to $k=n$ and using $B_{-1}/P_{-1} = 0$ (by convention $P_{-1}=1$, $B_{-1}=0$):
	\begin{equation}\label{eq:Bn_sum_final}
		B_n = P_n \sum_{k=0}^{n} t_k = \left( \prod_{j=1}^{n+1} d_j \right) \sum_{k=0}^{n} \frac{\prod_{j=1}^{k} c_j}{\prod_{j=1}^{k+1} d_j}.
	\end{equation}
	Asymptotically, the presence of the summation term identifies $\{B_n\}$ as the dominant solution.
	
	\subsubsection{Derivation of the Numerator $\{A_n\}$}
	In contrast, the numerator sequence $\{A_n\}$ is associated with the initial frame $(A_0, A_{-1}) = (b_0, 1)$. The initialization of the auxiliary sequence is handled separately from the recursion. The boundary value is:
	\begin{equation}
		w_0 = A_0 - d_1 A_{-1} = b_0 - d_1(1).
	\end{equation}
	We observe that $b_0 = 3(0)^2+3(0)+1 = 1$ and $d_1 = 1(2(1)-1) = 1$. Thus, $b_0 = d_1$.
	This implies $w_0 = 1 - 1 = 0$. For $k \ge 1$, iterating \eqref{eq:aux_prop} gives $w_k = (\prod_{j=1}^k c_j)\,w_0 = 0$. By the homogeneity of \eqref{eq:aux_prop}, the sequence $\{w_n\}$ vanishes identically for all $n \ge 0$. This ensures that $\{A_n\}$ satisfies the pure first-order recurrence $A_n = d_{n+1} A_{n-1}$, collapsing to the minimal solution trajectory:
	\begin{equation}\label{eq:An_product_final}
		A_n = \prod_{j=1}^{n+1} d_j.
	\end{equation}
	
	\begin{remark}
		The boundary condition $b_0 = d_1$ acts as a critical selection rule. It ensures that the numerator sequence $\{A_n\}$ is captured entirely within the kernel of the decoupling operator, whereas the denominator sequence $\{B_n\}$ is forced into the non-homogeneous solution space.
	\end{remark}
	
	\subsubsection{Series Representation of the Convergent Ratio}
	The $n$-th convergent of the continued fraction, $x_n = A_n/B_n$, is formulated by the ratio of \eqref{eq:Bn_sum_final} to \eqref{eq:An_product_final}.
	The common product terms cancel exactly, yielding:
	\begin{equation}\label{eq:xn_final}
		x_n = \frac{A_n}{B_n} = \left( \sum_{k=0}^{n} t_k \right)^{-1}, \quad \text{where } t_k = \frac{\prod_{j=1}^{k} c_j}{\prod_{j=1}^{k+1} d_j}.
	\end{equation}
	This fundamental equivalence maps the convergence of the polynomial continued fraction to the partial sums of an infinite series. The proof of the conjecture~\eqref{conj:main} is thereby reduced to the asymptotic evaluation of the sum $S = \sum_{k=0}^{\infty} t_k$.

	\subsection{Phase 3: Asymptotic Evaluation and Transcendental Summation}
	
	The convergence of the GCF is predicated on the existence of the limit of the convergents $\{x_n\}$. According to~\eqref{eq:xn_final}, the value of the GCF, denoted by $\mathcal{K}$, is mapped to the reciprocal of an infinite series:
	\begin{equation}\label{eq:limit_mapping}
		\mathcal{K} = \lim_{n \to \infty} \frac{A_n}{B_n} = \left( \sum_{k=0}^{\infty} t_k \right)^{-1}, \quad \text{where } t_k = \frac{\prod_{j=1}^{k} c_j}{\prod_{j=1}^{k+1} d_j}.
	\end{equation}
	
	\subsubsection{Detailed Algebraic Reduction of the General Term}
	To evaluate the series $S = \sum_{k=0}^{\infty} t_k$, we substitute the specific sequences $c_j = j^2$ and $d_j = j(2j-1)$. The numerator is given by the square of a factorial:
	\begin{equation}
		\prod_{j=1}^k c_j = (k!)^2.
	\end{equation}
	The denominator is expanded by grouping the linear and arithmetic progression terms:
	\begin{equation}
		\prod_{j=1}^{k+1} d_j = \prod_{j=1}^{k+1} j(2j-1) = (k+1)! \left( 1 \cdot 3 \cdots (2k+1) \right).
	\end{equation}
	Utilizing the identity for the product of odd integers, i.e.,
	\begin{equation}
		1 \cdot 3 \cdots (2k+1) = \frac{(2k+2)!}{2^{k+1}(k+1)!},
	\end{equation}
	the denominator simplifies to:
	\begin{equation}
		\prod_{j=1}^{k+1} d_j = (k+1)! \cdot \frac{(2k+2)!}{2^{k+1}(k+1)!} = \frac{(2k+2)!}{2^{k+1}}.
	\end{equation}
	Substituting these into the expression for $t_k$, we obtain:
	\begin{equation}
		t_k = \frac{(k!)^2}{\frac{(2k+2)!}{2^{k+1}}} = \frac{2^{k+1} (k!)^2}{(2k+2)!}.
	\end{equation}
	
	\subsubsection{Summation and Identification of the Binomial Series for $(\arcsin)^2$}
	We perform an index shift by letting $m = k+1$ (where $k=0 \implies m=1$). The general term $t_{m-1}$ becomes:
	\begin{equation}
		t_{m-1} = \frac{2^m ((m-1)!)^2}{(2m)!}.
	\end{equation}
	To express this in terms of the central binomial coefficient $\binom{2m}{m} = \frac{(2m)!}{(m!)^2}$, we multiply the numerator and denominator by $m^2$:
	\begin{equation}
		t_{m-1} = \frac{2^m ((m-1)! \cdot m)^2}{(2m)! \cdot m^2} = \frac{2^m (m!)^2}{(2m)! \cdot m^2} = \frac{2^m}{m^2 \binom{2m}{m}}.
	\end{equation}
	The infinite series $S$ is thus identified as:
	\begin{equation}
		S = \sum_{m=1}^{\infty} \frac{2^m}{m^2 \binom{2m}{m}}.
	\end{equation}
	
	\subsubsection{Transcendental Evaluation and Pincherle's Criterion}
	The series $S$ is a classical result in polylogarithmic analysis, corresponding to the evaluation of the squared arcsine function's Taylor series~\cite{Lehmer1985}:
	\begin{equation}\label{eq:arcsin_series}
		\sum_{m=1}^{\infty} \frac{(2x)^{2m}}{m^2 \binom{2m}{m}} = 2(\arcsin x)^2.
	\end{equation}
	This identity holds for $|x| \le 1$ \cite{Lehmer1985}; in the present evaluation $x = 1/\sqrt{2}$ satisfies $|x| = 1/\sqrt{2} < 1$, placing it strictly within the interior of the interval of convergence, so the series converges absolutely.
	Setting $x = 1/\sqrt{2}$ yields $(2x)^2 = 2$. Substituting this value, we find:
	\begin{equation}
		S = 2 \left( \arcsin \frac{1}{\sqrt{2}} \right)^2 = 2 \left( \frac{\pi}{4} \right)^2 = \frac{\pi^2}{8}.
	\end{equation}
	The convergence of this series is guaranteed by the ratio test, as
	\begin{equation}
		\rho = \lim_{m \to \infty} \frac{t_m}{t_{m-1}} = \lim_{m\to\infty}\frac{(m-1)^2}{m(2m-1)} = \frac{1}{2} < 1.
	\end{equation}
	Per Pincherle's Theorem, this absolute convergence confirms that the GCF converges to the reciprocal of the sum:
	\begin{equation}
		\mathcal{K} = S^{-1} = \frac{1}{\pi^2/8} = \frac{8}{\pi^2}.
	\end{equation}
	This establishes the identity rigorously.
	
	\section{Convergence Analysis and the Theory of Minimal Solutions}
	
	The transition from the algebraic identity \eqref{eq:xn_final} to the transcendental value $8/\pi^2$ requires a rigorous justification of the limit's existence. In the analytic theory of continued fractions, this convergence is fundamentally governed by the asymptotic properties of the underlying linear recurrence and the classification of its solution basis.
	
	\subsection{Pincherle's Criterion: The General Framework}
	
	The explicit computation in Section~3 is already self-contained: the formulas \eqref{eq:An_product_final} and \eqref{eq:Bn_sum_final} provide closed-form expressions for $A_n$ and $B_n$, and the ratio $A_n/B_n = (\sum_{k=0}^n t_k)^{-1}$ converges to $8/\pi^2$ by absolute convergence of $S$. The present section places these results within the classical framework of Pincherle's Theorem, which provides an independent theoretical characterization of why the GCF converges and identifies the true minimal solution of the underlying difference equation.
	
	\begin{theorem}[Pincherle's Theorem~{\cite{Pincherle1894,Gautschi1967}}]
		The generalized continued fraction $b_0 + \K_{n=1}^\infty (a_n / b_n)$ converges to a finite value if and only if the associated second-order linear difference equation
		\begin{equation}\label{eq:rec_pincherle}
			y_n = b_n y_{n-1} + a_n y_{n-2}
		\end{equation}
		admits a minimal solution $\{f_n\}$. A non-trivial solution $\{f_n\}$ is minimal if, for any solution $\{g_n\}$ linearly independent of $\{f_n\}$, the following asymptotic condition is satisfied:
		\begin{equation}
			\lim_{n \to \infty} \frac{f_n}{g_n} = 0.
		\end{equation}
		If such a minimal solution $\{f_n\}$ exists, the value of the GCF equals $\lim_{n\to\infty} A_n/B_n$, where $\{A_n\}$ and $\{B_n\}$ are the numerator and denominator sequences defined by the Wallis--Euler recurrence.
		The biconditional holds provided $a_n \neq 0$ for all $n \geq 1$, a condition satisfied here since $a_n = -(2n-1)n^3 \neq 0$.
	\end{theorem}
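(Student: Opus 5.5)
The plan is to work entirely inside the two-dimensional solution space of \eqref{eq:rec_pincherle}, using $\{A_n\}$ and $\{B_n\}$ from Theorem~\ref{the:Wallis-Euler Recurrence Theorem} as a basis.

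\textbf{Step 1: $\{A_n\},\{B_n\}$ form a basis.} Take the Casoratian $\Delta_n = A_nB_{n-1}-A_{n-1}B_n$. Substituting the recurrence gives $\Delta_n = -a_n\Delta_{n-1}$. The initial frames give $\Delta_0 = b_0\cdot 0 - 1\cdot 1 = -1$. Hence
\begin{equation*}
\Delta_n = (-1)^{n+1}\prod_{k=1}^{n} a_k ,
\end{equation*}
which is nonzero because $a_n\neq 0$. So $\{A_n\}$ and $\{B_n\}$ are linearly independent, and every solution can be written as $\gamma A_n+\delta B_n$. The same identity shows that $B_n$ and $B_{n-1}$ never vanish simultaneously.

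\textbf{Step 2 (convergence $\Rightarrow$ minimal solution).} Suppose $A_n/B_n\to x$ with $x$ finite; implicitly $B_n\neq 0$ for all large $n$. Set $f_n = A_n - xB_n$, which is a nontrivial solution because $f_{-1}=1$. Then
\begin{equation*}
\frac{f_n}{B_n} = \frac{A_n}{B_n}-x \longrightarrow 0 .
\end{equation*}
Now take any solution $g=\gamma A+\delta B$ linearly independent of $f$. Writing $g$ in the basis $\{f,B\}$ as $g = \gamma f + (\gamma x+\delta)B$, independence from $f$ means exactly that $\gamma x+\delta\neq 0$. Therefore $g_n/B_n\to \gamma x+\delta\neq 0$, so $g_n\neq 0$ eventually, and
\begin{equation*}
\frac{f_n}{g_n}=\frac{f_n/B_n}{g_n/B_n}\longrightarrow 0 .
\end{equation*}
Hence $f$ is minimal, and by construction $x=\lim A_n/B_n$.

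\textbf{Step 3 (minimal solution $\Rightarrow$ convergence).} Let $f=\alpha A+\beta B$ be minimal. Minimal solutions are unique up to a scalar: if $f'$ were another minimal solution independent of $f$, then $f_n/f'_n\to 0$ and $f'_n/f_n\to 0$ would both hold, which is impossible. Note that $\alpha$ is determined by the initial value, since $f_{-1}=\alpha$.

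If $\alpha\neq 0$, normalize to $f=A-xB$ with $x=-\beta/\alpha$. Since $B$ is independent of $f$, minimality gives $f_n/B_n\to 0$, that is, $A_n/B_n\to x$, a finite limit.

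\textbf{Main obstacle.} The difficulty is the degenerate case $\alpha=0$, i.e.\ the minimal solution is proportional to $\{B_n\}$ itself. Then $A_n/B_n\to\infty$, so the continued fraction converges only in the extended sense. This is the standard caveat in Pincherle's theorem: convergence to a finite value requires the minimal solution to satisfy $f_{-1}\neq 0$. I would state this nondegeneracy condition explicitly as part of the ``if'' direction.

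In the present application the condition can be checked directly. The closed forms \eqref{eq:An_product_final} and \eqref{eq:Bn_sum_final} show that $B_n/A_n=\sum_{k\le n}t_k$ stays bounded, so $\{B_n\}$ cannot be minimal relative to $\{A_n\}$. The minimal solution is therefore $f_n=A_n-(8/\pi^2)B_n$, which has $f_{-1}=1\neq 0$. A second technical point is the eventual nonvanishing of $B_n$, which Step~2 needs. In the general theorem it is part of the meaning of convergence. In the present case it is automatic, since $B_n>0$ by \eqref{eq:Bn_sum_final}.
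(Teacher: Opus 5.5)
The paper gives no proof of this statement: it imports Pincherle's theorem from \cite{Pincherle1894,Gautschi1967} and supplies only the Casoratian computation $W_n=-a_nW_{n-1}$, $W_0=-1$, which your Step~1 reproduces exactly. Your Steps~2--3 are a correct, self-contained argument in the basis $\{f,B\}$.

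Your caveat is a genuine correction rather than pedantry. As literally stated (convergence to a \emph{finite} value, no condition on $f_{-1}$), the ``if'' direction is false. For example, take $a_n\equiv 1$, $b_1=1-\sqrt{2}$ and $b_n=2$ for $n\ge 2$. Then $B_n=(1-\sqrt{2})^n$, which is minimal, while $A_n/B_n\to\infty$ (indeed $b_1+\K_{n=2}^{\infty}(1/2)=0$). The standard formulations either allow convergence in $\widehat{\mathbb{C}}$ or require $f_{-1}\neq 0$, exactly as you propose.

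The citation buys brevity. Your argument buys transparency: it shows that the paper's application in Proposition~\ref{prop:minimal} uses only the elementary direction ``convergence $\Rightarrow$ minimal solution,'' i.e.\ your Step~2. That step also fills a small gap there. The paper checks only $f_n/B_n\to 0$ before calling $f$ minimal, whereas you verify $f_n/g_n\to 0$ for every solution $g$ independent of $f$.

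One slip in your last paragraph: boundedness of $B_n/A_n=\sum_{k\le n}t_k$ does not rule out $\{B_n\}$ being minimal. Minimality of $\{B_n\}$ would mean $B_n/A_n\to 0$, and boundedness does not exclude that; the relevant fact is $B_n/A_n\ge t_0=1$. In any case the check is redundant. Once $A_n/B_n\to 8/\pi^2$ has been computed directly, Step~2 alone yields the minimal solution $A_n-(8/\pi^2)B_n$ with $f_{-1}=1$.
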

	
	In the present case, the Casoratian (discrete Wronskian) $W_n = A_n B_{n-1} - A_{n-1} B_n$ ensures that $\{A_n\}$ and $\{B_n\}$ are linearly independent solutions, providing a complete basis for the solution space of $\mathcal{L}y=0$. From the Wallis--Euler recurrence one verifies the identity $W_n = -a_n W_{n-1}$, so that $W_n = W_0 \prod_{j=1}^n (-a_j) = (-1)^{n+1}\prod_{j=1}^n a_j$ with $W_0 = A_0 B_{-1} - A_{-1} B_0 = -1$. Since $a_j = -(2j-1)j^3 \neq 0$ for all $j \ge 1$, we have $-a_j > 0$ and hence $W_n \ne 0$ for all $n$, confirming that $\{A_n\}$ and $\{B_n\}$ are indeed linearly independent.
	
	\subsection{Proof of Convergence for the Ramanujan GCF}
	
	We now apply this criterion to the specific polynomial coefficients $a_n$ and $b_n$ defining the Ramanujan identity.
	
	\begin{proposition}[Structure of the Solution Basis and the Minimal Solution]\label{prop:minimal}
		The numerator sequence $\{A_n\}$ and the denominator sequence $\{B_n\}$ defined in Section~2 constitute a basis of the solution space for the recurrence $\mathcal{L}\,y = 0$.  Both are dominant solutions satisfying $A_n \sim (8/\pi^2)\,B_n$ as $n\to\infty$.  The true minimal solution is the linear combination
		\begin{equation}\label{eq:minimal_def}
			f_n \;=\; A_n - \frac{8}{\pi^2}\,B_n,
		\end{equation}
		which satisfies $\lim_{n\to\infty} f_n/B_n = 0$, confirming that the continued fraction converges.
	\end{proposition}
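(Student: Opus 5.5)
The argument rests on the closed forms already obtained in Section~3: \eqref{eq:An_product_final} gives $A_n = P_n = \prod_{j=1}^{n+1} d_j$, and \eqref{eq:Bn_sum_final} gives $B_n = P_n S_n$ with $S_n = \sum_{k=0}^n t_k$. The plan has four steps: verify the basis property, establish the asymptotic relation between $A_n$ and $B_n$, compute the tail $f_n$ explicitly, and conclude with the stated limit and Pincherle's Theorem.

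\textbf{Basis property.} This follows directly from the Casoratian computation preceding the proposition. We have $W_n = (-1)^{n+1}\prod_{j=1}^n a_j \neq 0$ for all $n$. Hence $\{A_n\}$ and $\{B_n\}$ are linearly independent. Since the solution space of the second-order recurrence is two-dimensional, they form a basis.

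\textbf{Asymptotic relation.} Both $A_n$ and $B_n$ grow like $P_n = (2n+2)!/2^{n+1}$, which is superexponential, so both are dominant in size. The relation $A_n/B_n = S_n^{-1} \to S^{-1} = 8/\pi^2$ was established in Phase~3 via \eqref{eq:xn_final} and the arcsine evaluation \eqref{eq:arcsin_series}. This gives $A_n \sim (8/\pi^2)B_n$.

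\textbf{The minimal solution.} With $S = \pi^2/8$, one computes
\[
f_n = A_n - \frac{8}{\pi^2}B_n = P_n\left(1 - \frac{S_n}{S}\right) = \frac{P_n}{S}\sum_{k=n+1}^{\infty} t_k .
\]
Then $f_n/B_n = (S - S_n)/(S S_n)$, which tends to $0$ because the series $S$ converges and $S_n \to S > 0$. This is the stated limit.

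To justify calling $f_n$ minimal, I would check that $f_n/g_n \to 0$ for every solution $g_n = \alpha A_n + \beta B_n$ independent of $f_n$. Writing
\[
g_n = \alpha f_n + \left(\beta + \frac{8\alpha}{\pi^2}\right) B_n ,
\]
independence forces the coefficient $\beta + 8\alpha/\pi^2$ to be nonzero. Dividing by $B_n$ then shows $f_n/g_n \to 0$, using only $f_n/B_n \to 0$.

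For a quantitative rate, which is optional but reassuring, the ratio test bound $t_{k}/t_{k-1} \to 1/2$ gives $\sum_{k>n} t_k = O(t_n)$. Since $P_n t_n = w_n = (n!)^2$, this yields $f_n = O((n!)^2)$. Meanwhile $B_n \asymp (2n+2)!/2^{n+1}$, so $f_n/B_n = O(2^{n}(n!)^2/(2n+2)!) = O(2^{-n})$.

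\textbf{Conclusion and main obstacle.} Finally, Pincherle's Theorem applies, since $a_n \neq 0$ for all $n \ge 1$, and it confirms convergence of the continued fraction to $\lim A_n/B_n = 8/\pi^2$. The main obstacle is not analytic but one of bookkeeping: one must ensure the identity $f_n = (P_n/S)\sum_{k>n} t_k$ is correctly derived from the closed forms, with $A_n = P_n$ exactly, so that the tail representation is exact. Once this holds, every limit statement reduces to the convergence of the positive series $S$.
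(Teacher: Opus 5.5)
Your proposal is correct and follows essentially the same route as the paper: both use the closed forms $A_n = P_n$ and $B_n = P_n S_n$ to get $A_n/B_n = S_n^{-1} \to 8/\pi^2$, and then read off $f_n/B_n \to 0$ for $f_n = A_n - (8/\pi^2)B_n$. You add two things the paper does not do. The tail formula $f_n = (P_n/S)\sum_{k>n} t_k$ makes the smallness of $f_n$ explicit, and your check that $f_n/g_n \to 0$ for every solution $g_n$ independent of $f_n$ rigorously justifies the step the paper only asserts, namely the passage from ``subdominant relative to $B_n$'' to ``minimal.''
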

	
	\begin{proof}
		Consider the structure of the solutions derived via operator decoupling. From the boundary analysis in Phase 2, the numerator sequence is characterized by the vanishing of its auxiliary component ($w_n \equiv 0$), which forces $\{A_n\}$ to propagate strictly as the pure product trajectory:
		\begin{equation}
			A_n = \prod_{j=1}^{n+1} d_j.
		\end{equation}
		The denominator sequence, conversely, incorporates the propagation of the non-vanishing auxiliary term $w_0 = 1$, leading to the representation:
		\begin{equation}
			B_n = \left( \prod_{j=1}^{n+1} d_j \right) \left( \sum_{k=0}^{n} t_k \right).
		\end{equation}
		To determine the convergence behaviour, we examine the limit of the ratio between these two linearly independent solutions:
		\begin{equation}
			\textcolor{blue}{L \;:=\;} \lim_{n \to \infty} \frac{A_n}{B_n} = \lim_{n \to \infty} \frac{1}{\sum_{k=0}^{n} t_k}.
		\end{equation}
		As established in Phase 3, the general term $t_k$ satisfies the ratio test $\lim_{k \to \infty} |t_{k+1}/t_k| = 1/2 < 1$, which implies the absolute convergence of the series $S = \sum_{k=0}^{\infty} t_k$.
		
		Since $S = \pi^2/8$ is a finite positive constant, we obtain
		\begin{equation}
			L = \frac{1}{S} = \frac{8}{\pi^2},
		\end{equation}
		so $A_n \sim (8/\pi^2)\,B_n$ as $n\to\infty$.  In particular $\lim_{n\to\infty} A_n/B_n = 8/\pi^2 \neq 0$, which means that neither $\{A_n\}$ nor $\{B_n\}$ is the minimal solution; both are dominant solutions of equal asymptotic order.
		The true minimal solution is the linear combination $f_n = A_n - L\,B_n$ introduced in \eqref{eq:minimal_def}.  By linearity of the recurrence, $\{f_n\}$ satisfies $\mathcal{L}\,y=0$.  Moreover,
		\begin{equation}
			\frac{f_n}{B_n} = \frac{A_n}{B_n} - \frac{8}{\pi^2} \;\longrightarrow\; 0 \quad (n \to \infty),
		\end{equation}
		
		so $\{f_n\}$ is subdominant relative to $\{B_n\}$ and hence is the minimal solution in the sense of Pincherle's Theorem.  By that theorem, the existence of $\{f_n\}$ guarantees that the GCF converges, and its value is $\lim_{n\to\infty} A_n/B_n$, which we have already computed:
		\begin{equation}
			\mathcal{K} = S^{-1} = \left( \sum_{m=1}^{\infty} \frac{2^m}{m^2 \binom{2m}{m}} \right)^{-1} = \frac{8}{\pi^2}.
		\end{equation}
		The proof is complete.
	\end{proof}
	
	\begin{remark}[Worpitzky bound~{\cite[Thm.\,4.29]{wall1948analytic}; \cite[Thm.\,3.2.5]{lorentzen1992continued}}]
		The convergence of the GCF can also be confirmed independently via Worpitzky's theorem, which requires $\sup_{n \ge 1} |a_n / (b_n b_{n-1})| \le 1/4$. For our coefficients,
		\[
		\frac{|a_n|}{b_n b_{n-1}} = \frac{2n^4 - n^3}{(3n^2+3n+1)(3n^2-3n+1)},
		\]
		whose numerator has leading term $2n^4$ and whose denominator has leading term $9n^4$. Hence
		\[
		\lim_{n \to \infty} \frac{|a_n|}{b_n b_{n-1}} = \frac{2}{9} < \frac{1}{4},
		\]
		and a direct computation confirms that $|a_n|/(b_n b_{n-1}) \le 2/9 < 1/4$ for every $n \ge 1$. (The criterion $|a_n/(b_n b_{n-1})|\le 1/4$ is the form obtained from the original Worpitzky theorem for $\mathbf{K}(c_n/1)$ by applying the equivalence transformation $r_n = 1/b_n$, which maps the general fraction to one with unit partial denominators while preserving the value \cite[Thm.\,2.1]{wall1948analytic}.) Thus Worpitzky's theorem provides an independent elementary confirmation of absolute convergence.
	\end{remark}
	
	\section{Discussion and Conclusion}
	In this work, we have rigorously established the $8/\pi^2$ identity by demonstrating that the quartic polynomial continued fraction conjecture can be reduced to a tractable first-order cascade via shift operator decoupling. The convergence of the GCF is confirmed through Pincherle's Theorem: the true minimal solution of the associated difference equation is $f_n = A_n - (8/\pi^2)\,B_n$, which satisfies $f_n/B_n \to 0$ and thereby certifies convergence, while the convergent value itself is given directly by $\lim_{n\to\infty} A_n/B_n = 8/\pi^2$ as computed in Section~3. Our analysis reveals that the ratio $\rho = 1/2$ of the series $\sum t_k$ ensures absolute convergence of that series, and moreover, since $|a_n/(b_n b_{n-1})| \le 2/9 < 1/4$ for all $n \ge 1$ with $\lim_{n\to\infty} |a_n/(b_n b_{n-1})| = 2/9$, Worpitzky's classical criterion is also satisfied, providing an independent confirmation of the GCF's convergence.
	
	The methodology presented here suggests a broader applicability for the verification of automated conjectures involving higher-order polynomial coefficients. The discrete Green's function approach, enabled by the non-commutative operator decoupling, provides a robust framework for isolating transcendental constants within the kernel of variable-coefficient difference equations. These results suggest that the ``hidden'' algebraic structures discovered by the Ramanujan Machine are deeply rooted in the decoupling of operator rings, providing a systematic pathway for verifying automated conjectures of higher-order transcendental constants.

	\section*{Author contributions}
	The author is solely responsible for the conception, execution, and writing of this
	study.
	\section*{Data availability}
	No data was used for the research described in the article.
	\section*{Declarations}
	Competing interests The authors declare no competing interests.

	\section*{Acknowledgments}
	The authors received no financial support for the research, authorship, and/or publication of this article.
	\bibliography{sn-bibliography}

@article{Raayoni2021,
  author    = {Raayoni, Gal and Gottlieb, Shahar and Manor, Yoav and
               Pisha, George and Harris, Yahel and Mendlovic, Uri and
               Haviv, Doron and Hadad, Yaron and Kaminer, Ido},
  title     = {Generating conjectures on fundamental constants with the
               {Ramanujan} {Machine}},
  journal   = {Nature},
  volume    = {590},
  pages     = {67--73},
  year      = {2021},
  doi       = {10.1038/s41586-021-03229-4}
}

@book{wall1948analytic,
  author    = {Wall, Hubert Stanley},
  title     = {Analytic Theory of Continued Fractions},
  publisher = {D.\ Van Nostrand},
  address   = {New York},
  year      = {1948},
  note      = {Reprinted by Dover Publications, 2018.
               ISBN~978-0-486-83044-5}
}

@book{lorentzen1992continued,
  author    = {Lorentzen, Lisa and Waadeland, Haakon},
  title     = {Continued Fractions with Applications},
  series    = {Studies in Computational Mathematics},
  volume    = {3},
  publisher = {North-Holland},
  address   = {Amsterdam},
  year      = {1992},
  isbn      = {978-0-444-89265-2}
}

@article{Gautschi1967,
  author    = {Gautschi, Walter},
  title     = {Computational aspects of three-term recurrence relations},
  journal   = {SIAM Review},
  volume    = {9},
  number    = {1},
  pages     = {24--82},
  year      = {1967},
  doi       = {10.1137/1009002}
}

@article{Pincherle1894,
  author    = {Pincherle, Salvatore},
  title     = {Delle funzioni ipergeometriche e di varie questioni ad esse
               attinenti},
  journal   = {Giornale di Matematiche di Battaglini},
  volume    = {32},
  pages     = {209--291},
  year      = {1894}
}

@article{Lehmer1985,
  author    = {Lehmer, Derrick Henry},
  title     = {Interesting series involving the central binomial coefficient},
  journal   = {American Mathematical Monthly},
  volume    = {92},
  number    = {7},
  pages     = {449--457},
  year      = {1985},
  doi       = {10.2307/2322496}
}
	
\end{document}